    \theoremstyle{plain}
   \newtheorem{thm}{Theorem}
   \newtheorem{pro}[thm]{Proposition}
   \newtheorem{lem}[thm]{Lemma}
   \newtheorem{cor}[thm]{Corollary}
   \theoremstyle{definition}
   \theoremstyle{remark}
   \newtheorem{rem}[thm]{{\it Remark}}
\newcounter{dc}
\DeclareMathOperator{\okr}{{\stackrel{{\scriptscriptstyle{\mathsf{def}}}}{=}}}
\DeclareMathOperator{\D}{d\!}
 \DeclareMathOperator{\I}{i}
\DeclareMathOperator{\lin}{lin}
\DeclareMathOperator{\clolin}{clolin}
\def\dz#1{\mathcal D({#1})}
\def\funk#1#2#3{#1\colon#2\to#3}
\def\funkc#1#2#3#4#5{#1\colon#2\ni#3\mapsto#4\in#5}
\def\Ge{\geqslant}
\def\is#1#2{\langle#1,#2\rangle}
\def\Le{\leqslant}
\def\sbar#1{\,\overline{\!#1}}
\def\zb#1#2{\{{#1}\colon\ {#2}\}}
\def\ddc{\mathcal D}
\def\eec{\mathcal E}
\def\ffc{\mathcal F}
\def\hhc{\mathcal H}
\def\kkc{\mathcal K}
\def\llc{\mathcal L}
\def\ssf{\mathfrak S}
\def\ssfm{\mathfrak s}
\def\ttfm{\mathfrak t}
\def\uufm{\mathfrak u}
\def\fmax{\ffc_{max}}
\begin{document}

  \title[Framings and dilations]{Framings and dilations}
   \author[D.R. Larson]{David R. Larson}
   \address{Department of Mathematics, Texas A\&M University, College Station, TX}
   \email{larson@math.tamu.edu}
   \author[F.H. Szafraniec]{Franciszek Hugon Szafraniec}
   \address{Instytut Matematyki, Uniwersytet Jagiello\'nski,
ul. {\L}ojasiewicza 6, PL-30348 Krak\'ow}
   \email{umszafra@cyf-kr.edu.pl}
   \thanks{Research was initiated during Workshops in Analysis and Probability 2011 and 2012 at Texas A\&M University, College Station, TX. At its final stage that of the second author was supported by the MNiSzW grant NN201 1546438.}
   %\subjclass{Primary   ;  Secondary    }
   %\keywords{ }
   %
   \dedicatory{En comm\'emoration du centi\`eme anniversaire de B\'ela Sz\H{o}kefalvi Nagy,\\ le Grand Ma\^itre de la Th\'eorie des Op\'erateurs}
   %%%%%%%%%%%%%%%%%%
   \begin{abstract}
  The notion of framings,  recently emerging in \cite{caz} as generalization of the reconstraction formula generated by pairs of dual frames, is in this note extended substantially. This calls on refining the basic dilation results which still being in the flavor of {\em th\'eor\`eme principal} of B. Sz-Nagy \cite{app} go much beyond it.
        \end{abstract}
      \maketitle
 
 Framings considered here generalize those introduced in \cite{caz} though their environment  is more specific, the Hilbert space. On the other hand, they affect a space which may not be complete; an operator based procedure, Theorem \ref{2.24.11}, helps to develop further generations. Anyway the aforesaid non-completeness requires to extend know dilation theorems substantially; this is done in the second part of the paper.
 
     \section*{Framings}%\label{1.20.12}
      \subsection{Preliminaries}\label{2.30.12}
       Suppose $\mathcal H$ is a  complex Hilbert space.   A pair   $(g_{n})_{n=0}^{\infty}$ and $(h_{n})_{n=0}^{\infty}$ of sequences in $\hhc$, alternatively denoted as $(g_{n},h_{n})_{n=0}^{\infty}$, is said to be a {\em framing in} $\hhc$ if 
   \begin{equation}\label{2.01.09}
f=\sum_{n=0}^{\infty}\is{f}{g_{n}}h_{n}
\end{equation}
unconditionally for at least one nonzero $f$ in $\hhc$.  Because the right hand side of \eqref{2.01.09} does not depend on rearranging its order\,\footnote{\; In \cite[p.15]{lin} this important fact is mentioned.} framing is well defined.

Denote  by $\fmax$ the set of all $f$'s in $\hhc$ such that \eqref{2.01.09} holds; it always is a linear subspace of $\hhc$. If a choice of $\ffc\subset\fmax$ has been made we shortly say that  $(g_{n},h_{n})_{n=0}^{\infty}$ is a {\em framing for} $\ffc$\,\footnote{\;Framings were introduced in \cite{caz} and considered there even in the Banach space setting however $\ffc$  was chosen the whole space.} or, the other way,     $\ffc$  is a {\em framing space} for the framing$(g_{n},h_{n})_{n=0}^{\infty}$. While the choice of $\ffc$ is a matter of convenience,  $\fmax$ is uniquely determined. Notice that $\ffc_{\max}$ is stable under any (simultaneous) permutation of the framing $(g_{n},h_{n})_{n=0}^{\infty}$.

 The following easy to prove fact is worthy to be itemized.
\begin{pro}
Always
\begin{equation*}%\label{1.16.12}
 \fmax\subset\clolin(h_{n})_{n=0}^{\infty}.
\end{equation*}
%Moreover, with $P$ standing for the orthogonal projection onto $ \clolin(g_{n})_{n=0}^{\infty}$, one has
%\begin{equation*}%\label{1.07.12}
%P\ffc\subset\ffc_{max} \,\Longleftrightarrow\, P\ffc=\ffc
%\end{equation*}
%for any $\ffc\subset\fmax$.
\end{pro}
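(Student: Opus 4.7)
The plan is essentially to unwind the definition of $\fmax$ and observe that every element of $\fmax$ is a norm limit of finite linear combinations of the vectors $h_n$. So the proof should be very short.

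More concretely, I would take $f \in \fmax$. By definition \eqref{2.01.09} the series $\sum_{n=0}^{\infty}\is{f}{g_{n}}h_{n}$ converges unconditionally to $f$; in particular, it converges in norm with respect to the natural ordering of $\nnb$. Hence the partial sums $S_N \okr \sum_{n=0}^{N}\is{f}{g_{n}}h_{n}$ satisfy $\|f - S_N\| \to 0$. Each $S_N$ is, by construction, a finite linear combination of $h_0,\dots,h_N$, so $S_N \in \lin(h_n)_{n=0}^{\infty}$ for every $N$. Therefore $f$ is a norm limit of elements of $\lin(h_n)_{n=0}^{\infty}$, i.e.\ $f \in \clolin(h_n)_{n=0}^{\infty}$, which is exactly what we had to show.

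There is no real obstacle here: the only thing that must be invoked is the fact that unconditional convergence implies norm convergence of the sequence of partial sums in their natural order, which is immediate in a Hilbert (indeed, any Banach) space setting and is implicit in the very definition of unconditional convergence used in \eqref{2.01.09}.
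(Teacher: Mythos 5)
Your argument is correct and is precisely the evident one: unconditional convergence of $\sum_{n}\is{f}{g_{n}}h_{n}$ to $f$ gives norm convergence of the partial sums, each of which lies in $\lin(h_{n})_{n=0}^{\infty}$, so $f\in\clolin(h_{n})_{n=0}^{\infty}$. The paper itself omits the proof, calling the fact ``easy to prove,'' and your write-up supplies exactly the intended reasoning.
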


This has to be kept in mind when performing the construction proposed in Theorem \ref{2.24.11}.

%\begin{pro}\label{subframe}\uwam{added}
%If $(m_{n})_{n=0}^{\infty}\subset \nnb\cup\{0\}$ is an increasing sequence then $(g_{m_{n}})_{n=0}^{\infty}$ and $(h_{m_{n}})_{n=0}^{\infty}$ is a framing provided so is $(g_{n})_{n=0}^{\infty}$ and $(h_{n})_{n=0}^{\infty}$.
%
%\noindent If $(g_{n})_{n=0}^{\infty}$ and $(h_{n})_{n=0}^{\infty}$ is a framing for $\ffc$ then $(g_{m_{n}})_{n=0}^{\infty}$ and $(h_{m_{n}})_{n=0}^{\infty}$ is a framing for $P\ffc$ where $P$ is the orthogonal projection of $\hhc$ onto $\clolin(h_{m_{n}})_{n=0}^{\infty}$.
%\end{pro}
%\begin{proof}
%Indeed, for $f\in\ffc$
%\begin{align*}
%Pf=P\sum_{n=0}^\infty \is{f}{g_{n}}h_{n}=\sum_{n=0}^\infty \is{f}{g_{n}}Ph_{n}
%\end{align*}
%\end{proof}
%
%
%This can be get from any of (ii), (iii), (v) in \cite[Theorem 1.9]{dies}. Call $(g_{m_{n}})_{n=0}^{\infty}$ and $(h_{m_{n}})_{n=0}^{\infty}$ a {\em subframing} of $(g_{n})_{n=0}^{\infty}$ and $(h_{n})_{n=0}^{\infty}$. This procedure may extend $\ffc_{\max}$ of course.
%

Framing representation \eqref{2.01.09} is invariant with respect to the operation of rescaling. More precisely, if $(\alpha_{n})_{n=0}^{\infty}$ and $(\beta_{n})_{n=0}^{\infty}$ are such that 
\begin{equation}\label{1.27.11}
\alpha_{n}\sbar\beta_{n}=1 \text{ for all } n
\end{equation}
 then \eqref{2.01.09} is preserved for the new sequences $(\beta_{n}g_{n})_{n=0}^{\infty}$ and $(\alpha_{n}h_{n})_{n=0}^{\infty}$.  Example 1.3 in \cite{hlll} shows that this property is not pertinent to pairs of frames. Theorem \ref{2.24.11} below is a far going generalization of the rescaling procedure; in particular \eqref{1.27.11} is replaced by its operator version \eqref{1.17.01}.

\subsection{Generating framings}\label{3.30.12}   Instead of complex number sequences in the rescaling we use operators, possibly unbounded. This is a highly non-trivial generalization, also when one see rescaling as a very particular case of Proposition 3.8 in \cite{caz}.
\begin{lem}\label{2.24.11}
Let   $(g_{n},h_{n})_{n=0}^{\infty}$ be   a {framing} for $\ffc$. Suppose $A$ and $B$ are  closed operators, densely defined in $\hhc$ and such that\,\footnote{\;$\dz T$ stands for the domain of an operator $T$.}  $(g_{n})_{n=0}^{\infty}\subset \dz A$, $(h_{n})_{n=0}^{\infty}\subset \dz {B}$. 
Suppose, moreover, that there are given two linear subspaces of $\hhc$: 
\begin{equation*}
\ffc\neq\{0\}\text{ and }\ddc_{B}\subset\dz{B^{*}}\text{ dense in }\hhc.
\end{equation*}
\noindent If
\begin{equation}\label{1.11.06}
\ffc\subset\ffc_{A}\okr\zb g{g\in\dz{ A^{*}}\;\,\&\:\,A^{*}g\in\ffc_{\max}}
\end{equation}
and 
\begin{equation}\label{1.17.01}
 \is{A^{*}f}{B^{*}h}=\is fh,\quad f\in\ffc,\;h\in\ddc_B,
 \end{equation}
% \begin{equation}\label{1.24.11}
%   \ffc\subset \dz{A^{*}}=\text{domain of }A^{*}
%   \end{equation}
then for every $f\in\ffc$
\begin{equation}\label{7.01.09}
\is fh=\lim_{N}\is{\sum_{n}^{N}\is{A^{*}f}{g_{n}}Bh_{n}}{h},\quad h\in\ddc_{B}.
\end{equation}

% \begin{equation}\label{1.08.05}
%\lim_{N}\is{\sum_{n}^{N}\is{f}{Ag_{n}}Bh_{n}}{h}=\is fh,
% \quad h\in\ddc_B.
%\end{equation}
%If  for any $f\in\ffc_{A}$ and any subframing $(g_{m_{n}})_{n=0}^{\infty}$ and $(h_{m_{n}})_{n=0}^{\infty}$ of $(g_{n})_{n=0}^{\infty}$ and $(h_{n})_{n=0}^{\infty}$ the sequence
%\begin{equation}\label{1.30.07}
%\sum_{n}^{N}\is{f}{Ag_{m_{n}}}Bh_{m_{n}},\quad N=0,1\ldots
%\end{equation}
%is norm bounded then  $(Ag_{n})_{n=0}^{\infty}$ and $(Bh_{n})_{n=0}^{\infty}$ is a framing on $\ffc_{A}$ as well.
\end{lem}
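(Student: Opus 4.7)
The plan is to apply the framing expansion to $A^{*}f$ and then pair the resulting series against $B^{*}h$, recovering $\is fh$ on the left via the compatibility \eqref{1.17.01}.

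\textbf{Execution.} Take $f\in\ffc$. The hypothesis $\ffc\subset\ffc_{A}$ gives $f\in\dz{A^{*}}$ and $A^{*}f\in\ffc_{\max}$, so the very definition of $\ffc_{\max}$ tells me that
$$A^{*}f=\sum_{n=0}^{\infty}\is{A^{*}f}{g_{n}}h_{n}$$
converges in norm in $\hhc$ (unconditionally). Next, fix $h\in\ddc_{B}\subset\dz{B^{*}}$; continuity of the functional $\is{\cdot}{B^{*}h}$ lets me carry it through the limit term by term to obtain
$$\is{A^{*}f}{B^{*}h}=\lim_{N}\sum_{n=0}^{N}\is{A^{*}f}{g_{n}}\is{h_{n}}{B^{*}h}.$$
Since $h_{n}\in\dz B$ and $h\in\dz{B^{*}}$, the adjoint identity gives $\is{h_{n}}{B^{*}h}=\is{Bh_{n}}{h}$, while the left-hand side equals $\is fh$ by \eqref{1.17.01}. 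Pulling the finite sums out of the inner product on the right yields precisely \eqref{7.01.09}.

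\textbf{Where the difficulty is (or isn't).} There is essentially no analytic obstacle once the bookkeeping of domains is in order: each inner product appearing in the argument is well defined thanks to the hypotheses $(g_{n})\subset\dz A$, $(h_{n})\subset\dz B$, $\ffc\subset\dz{A^{*}}$, and $\ddc_{B}\subset\dz{B^{*}}$. The single analytic ingredient is the norm convergence of the framing series, which makes the interchange of sum and inner product automatic. I note that the closedness of $A$ and $B$ and the density of $\ddc_{B}$ in $\hhc$ are not used in this particular identity; they are presumably kept in place for the passage from the weak relation \eqref{7.01.09} to stronger forms of reconstruction in the sequel.
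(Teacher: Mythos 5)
Your proof is correct and follows essentially the same route as the paper's: expand $A^{*}f$ by the framing formula (valid since $A^{*}f\in\ffc_{\max}$), pair against $B^{*}h$, move $B^{*}$ across the inner product termwise, and invoke \eqref{1.17.01} to identify the limit with $\is fh$. Your closing observation that closedness of $A,B$ and density of $\ddc_{B}$ are not needed here is also consistent with the paper's argument, which likewise does not use them in this lemma.
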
    

\begin{proof} 
%   What we are going to prove is that
%\begin{equation}\label{7.01.09}
%f=\sum_{n=0}^{\infty}\is{f}{Ag_{n}}Bh_{n},\quad f\in\mathcal F_{A},
%\end{equation}
%unconditionally.

 For $f\in\dz{A^{*}}$ define the truncations
\begin{equation}\label{1.30.07}
f_{N}\okr\sum_{n}^{N}\is{f}{Ag_{n}}Bh_{n}.
\end{equation}
 With  \eqref{1.30.07} in mind we have 
 \begin{equation}\label{2.30.07}
\is{f_{N}}{h}=\sum_{n}^{N}\is{f}{Ag_{n}}\is{Bh_{n}}{h}=\sum_{n}^{N}\is{A^{*}f}{g_{n}}\is{h_{n}}{B^{*}h}= \is{\sum_{n}^{N}\is{A^{*}f}{g_{n}}h_{n}}{B^{*}h},\quad h\in\ddc_B.
\end{equation}
Now if $f\in\ffc$ the framing formula \eqref{2.01.09} is applicable. Thus $\sum_{n}^{N}\is{A^{*}f}{g_{n}}h_{n}$ tends to $A^{*}f$  and, due to \eqref{2.30.07},
\begin{equation}\label{x1.3.08}
\is{f_{N}}{h}\rightarrow\is{A^{*}f}{B^{*}h}\stackrel{\eqref{1.17.01}}=\is fh,\quad h\in\ddc_B.
\end{equation}
%Because $f_{N}$ converges to $f$ in the norm of $\hhc$, the sequence $(f_{N})_{N}$ is norm bounded. Consequently, for $ h\in\hhc$ and $\tilde h\in\ddc_B$ one has
%\begin{align*}
%|\is {(f_{N+p}-f_{N})}h|\Le-
%\end{align*}
Therefore, using \eqref{x1.3.08}, still for {$f\in\ffc$}
\begin{align*}
\is{f}{h}=\lim_{N}\is{f_{N}}h=\lim_{N}\is{\sum_{n}^{N}\is{A^{*}f}{g_{n}}Bh_{n}}{h},\quad h\in\ddc_B.
\end{align*}
which is just \eqref{7.01.09}.
%implies
%\begin{equation*}
%|\lim_{N}\is{\sum_{n}^{N}\is{A^{*}f}{g_{n}}Bh_{n}}{h}\,|\le \|f\|\|h\|,\quad h\in\ddc_B.
%\end{equation*}
%Consequently, due to  \eqref{1.01.09} and density of $\ddc_B$, 
%$$h\mapsto\lim_{N}\is{\sum_{n}^{N}\is{A^{*}f}{g_{n}}Bh_{n}}{h}$$ 
%extends to a unique conjugate linear bounded functional  on $\hhc$. Therefore, there is a unique $k\in\hhc$ representing the aforesaid functional, that is 
%$$
%\is kh=\lim_{N}\is{\sum_{n}^{N}\is{A^{*}f}{g_{n}}Bh_{n}}{h},\quad h\in\ddc_{B}.
%$$
%Now  \eqref{1.01.09} says that   $\is kh=\is fh $ when $h$ ranges over a dense subspace $\ddc_{B}$. Therefore
%$$
%\is kh=\is fh=\lim_{N}\is{\sum_{n}^{N}\is{A^{*}f}{g_{n}}Bh_{n}}{h},\quad h\in\hhc
%$$which means that
%  \eqref{7.01.09} holds weakly. 
\end{proof}
\begin{rem}\label{1t.12.06}
It turns out to be important for the arguments  in the steps which follow to notice that the conclusion of Lemma \ref{2.24.11} is independent of any rearrangement of the series \eqref{2.01.09}, hence of any rearrangement of \eqref{8.13.05}.
\end{rem}

By a standard argument we arrive at the following.
\begin{cor}\label{1.13.05}
If the assumptions of Lemma \ref{2.24.11} are satisfied and the sums
\begin{equation}\label{3.13.05}
\sum_{n}^{N}\is{f}{Ag_{n}}Bh_{n},\quad N=0,1,\dots
\end{equation}
are norm bounded for $f\in\ffc_{A}$
then the series
\begin{equation*}%\label{1.18.05}
\sum_{n}^{\infty}\is{A^{*}f}{g_{n}}Bh_{n}
\end{equation*}
is weakly convergent to $f$ for every $f\in\ffc$.
\end{cor}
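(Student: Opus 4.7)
The plan is to apply Lemma \ref{2.24.11} and then bootstrap the pointwise convergence on the dense subspace $\ddc_B$ to weak convergence on all of $\hhc$, using the norm boundedness hypothesis as the extra ingredient that is absent from the lemma.

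First, observe that the partial sums displayed in \eqref{3.13.05} are exactly the truncations $f_N$ from \eqref{1.30.07}: for $f\in\ffc_A\subset\dz{A^{*}}$ and $g_{n}\in\dz A$ we have $\is{f}{Ag_{n}}=\is{A^{*}f}{g_{n}}$, so
\begin{equation*}
f_{N}=\sum_{n}^{N}\is{f}{Ag_{n}}Bh_{n}=\sum_{n}^{N}\is{A^{*}f}{g_{n}}Bh_{n}.
\end{equation*}
In particular, for $f\in\ffc$ (which lies in $\ffc_A$ by \eqref{1.11.06}) the sequence $(f_{N})_{N}$ is norm bounded by hypothesis, and Lemma \ref{2.24.11} furnishes the pointwise convergence $\is{f_{N}}{h}\to\is{f}{h}$ for every $h\in\ddc_{B}$.

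Next, I would run the standard $\varepsilon$-argument to pass from $\ddc_{B}$ to all of $\hhc$. Set $M\okr\sup_{N}\|f_{N}\|<\infty$. Given an arbitrary $h\in\hhc$ and $\varepsilon>0$, choose $h'\in\ddc_{B}$ with $\|h-h'\|<\varepsilon/(M+\|f\|+1)$ (possible by the density of $\ddc_{B}$ in $\hhc$ assumed in Lemma \ref{2.24.11}) and then pick $N_{0}$ so that $|\is{f_{N}-f}{h'}|<\varepsilon$ for $N\Ge N_{0}$. The triangle inequality
\begin{equation*}
|\is{f_{N}-f}{h}|\Le (\|f_{N}\|+\|f\|)\,\|h-h'\|+|\is{f_{N}-f}{h'}|
\end{equation*}
yields $|\is{f_{N}-f}{h}|<2\varepsilon$ for $N\Ge N_{0}$, i.e. $\is{f_{N}}{h}\to\is{f}{h}$ for every $h\in\hhc$. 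This is precisely the asserted weak convergence.

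There is no serious obstacle: the whole content of the corollary is the routine fact that a norm-bounded net which converges weakly on a dense subspace converges weakly everywhere. The only points requiring a moment's attention are the identification of the partial sums in \eqref{3.13.05} with the $f_{N}$ of \eqref{1.30.07} and the verification that the boundedness, postulated for $f\in\ffc_A$, is available for the $f\in\ffc$ of the conclusion; both reduce to the inclusion $\ffc\subset\ffc_A$ given by \eqref{1.11.06}.
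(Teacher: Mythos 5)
Your proof is correct and is exactly the "standard argument" the paper alludes to without writing out: identify the partial sums \eqref{3.13.05} with the truncations $f_N$ of \eqref{1.30.07} via $\is{f}{Ag_n}=\is{A^{*}f}{g_n}$, invoke Lemma \ref{2.24.11} for convergence against $h\in\ddc_B$, and upgrade to all of $\hhc$ by norm boundedness plus density. Nothing is missing.
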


The definition of framing consists of two, somehow independent facts: the unconditional convergence of the right hand side of  \eqref{2.01.09} and the reconstruction formula \eqref{2.01.09} itself. While Lemma \ref{2.24.11}  causes the reconstruction formula \eqref{7.01.09}
%\begin{equation}\label{7.01.09}
%f=\sum_{n=0}^{\infty}\is{f}{Ag_{n}}Bh_{n},\quad f\in\mathcal F_{A},
%\end{equation}
to hold weakly, unconditional convergence requires a separate treatment. The relationship between these two in the context of framings is intriguing anyway.
\begin{thm}\label{2.13.05}
Let the assumptions of Lemma \ref{2.24.11} be satisfied. If any of the two following conditions 
\begin{enumerate}
\item[(i)] the series
\begin{equation}\label{8.13.05}
\sum_{n}^{\infty}\is{f}{Ag_{n}}Bh_{n},\quad  f\in\ffc
\end{equation}
is either weakly subseries convergent or strongly  subseries convergent;
\item[(ii)]  $B$ is a bounded operator
\end{enumerate}
is satisfied then 
 $(Ag_{n},Bh_{n})_{n=0}^{\infty}$ is a framing for $\ffc$.
\end{thm}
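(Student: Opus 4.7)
The plan is to show that for every $f \in \ffc$ the partial sums $S_N \okr \sum_{n}^{N}\is{f}{Ag_{n}}Bh_{n}$ converge unconditionally in norm to $f$; this is exactly what it means for $(Ag_{n},Bh_{n})_{n=0}^{\infty}$ to be a framing for $\ffc$. Lemma \ref{2.24.11} already supplies a weak evaluation: noting that $\is{f}{Ag_{n}}=\is{A^{*}f}{g_{n}}$ (legitimate since $f\in\ffc\subset\ffc_{A}\subset\dz{A^{*}}$ and $g_{n}\in\dz{A}$), formula \eqref{7.01.09} reads $\is{S_N}{h}\to\is{f}{h}$ for every $h\in\ddc_B$. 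All the remaining labour consists in upgrading this partial, weak-against-a-dense-set information to genuine unconditional norm convergence and in pinning down the limit.

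For case (ii), where $B$ is bounded, the assumption $\ffc\subset\ffc_{A}$ puts $A^{*}f\in\fmax$, so the framing property of $(g_{n},h_{n})$ yields the unconditional norm expansion $A^{*}f=\sum_{n}\is{A^{*}f}{g_{n}}h_{n}$. Since a bounded linear operator maps an unconditionally convergent series to an unconditionally convergent series whose sum is the image of the original sum, applying $B$ produces
\[
\sum_{n}\is{f}{Ag_{n}}Bh_{n}=BA^{*}f \qquad\text{unconditionally in norm.}
\]
Norm convergence implies weak convergence, and confronting this with the evaluation from Lemma \ref{2.24.11} forces $\is{BA^{*}f}{h}=\is{f}{h}$ for all $h\in\ddc_{B}$; density of $\ddc_{B}$ in $\hhc$ then gives $BA^{*}f=f$.

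For case (i), strong subseries convergence is, by the classical equivalences, tantamount to unconditional norm convergence; weak subseries convergence feeds the Orlicz--Pettis theorem, valid in every Banach, hence every Hilbert, space, and yields the same conclusion. Either way $(S_N)$ converges in norm to some $F\in\hhc$; since norm convergence implies weak convergence, the weak evaluation from Lemma \ref{2.24.11} against the dense set $\ddc_{B}$ identifies $F$ with $f$ by uniqueness of weak limits.

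The main obstacle is not any individual computation but rather the conceptual recognition that Lemma \ref{2.24.11}, which on its face delivers only weak-type convergence against a dense test class, becomes decisive once a separate regularity ingredient---boundedness of $B$ together with unconditional convergence inherited from the original framing in (ii), or the Orlicz--Pettis theorem in (i)---has promoted the partial sums to a norm-convergent sequence. Once this observation is made, each case is a two-line reduction to a standard result.
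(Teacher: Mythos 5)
Your proposal is correct and follows essentially the same route as the paper: in case (ii) you transfer unconditional convergence from the framing expansion of $A^{*}f$ through the bounded operator $B$ (the paper phrases this via Cauchy fragments, you via the image of an unconditionally convergent series, which is the same estimate), and in case (i) you invoke Orlicz--Pettis exactly as the paper does; in both cases the limit is identified as $f$ by testing against the dense set $\ddc_{B}$ via Lemma \ref{2.24.11}. The only cosmetic difference is that the paper routes the identification in case (i) through Corollary \ref{1.13.05}, whereas you identify the norm limit directly, which amounts to the same argument.
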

\begin{proof}
For (i) notice that weak convergence of  \eqref{3.13.05} itself makes its norm be bounded. Thus Corollary \ref{1.13.05} guarantees the series \eqref{8.13.05} is converging weakly to $f$. On the other hand, due to the assumption of (i) a direct application of Orlicz-Pettis theorem, cf.  \cite[Theorem 1.9]{dies} ensures the series $\sum_{n}^{\infty}\is{f}{Ag_{n}}Bh_{n}$ to converge unconditionally. Thus 
the only possibility is it to converge to $f$.

For (ii) consider any arrangement of the sequence $(\is{f}{Ag_{n}}Bh_{n})_{n=0}^{\infty}$ and keep the same notation for it. Then, due to the boundedness of $B$ and the reconstruction formula \eqref{2.01.09}, one has
$$
\|\sum\nolimits_{n}\is{f}{Ag_{n}}Bh_{n}\|\Le \|B\sum\nolimits_{n}\is{f}{Ag_{n}}h_{n}\|\Le
\|B\|\,\|\sum\nolimits_{n}\is{f}{Ag_{n}}h_{n}\|=\|B\|\,\|\sum\nolimits_{n}\is{A^{*}f}{g_{n}}h_{n}\|
$$
which, when applied to Cauchy fragments of the series in question, makes  unconditional convergence of the right hand side of  
\eqref{7.01.09}. This leads to the final conclusion while invoking Lemma \ref{2.24.11}
% (or Corollary \ref{1.13.05} but this requires to the Orlicz-Pettis theorem again)
.
\end{proof}

\begin{rem}\label{2.17.01}
Theorem \ref{2.24.11} is symmetric in a sense that the assumptions are invariant under the replacement of $A$ and $B$. The only thing which may change, besides the framing formula \ref{1.30.07},
 is $\ffc_{A}$  turns into $\ffc_{B}$ which may be different. This way one gets a ``dual'' framing.
 
 If $\fmax=\hhc$, then symmetricity of framing is is the matter of Lemma 3.12 of \cite{hlll}
\end{rem}

%\begin{cor}\label{3.17.01}
%If $B$ in {\em Theorem \ref{2.24.11}} is  a bounded operator on $\hhc$ then the sequence \eqref{1.30.07} is norm bounded, hence the conclusion holds. 
%\end{cor}
%\begin{proof}
%Notice that
%$$
%\|\sum_{n}^{N}\is{f}{Ag_{n}}Bh_{n}\|\Le \|B\sum_{n}^{N}\is{f}{Ag_{n}}h_{n}\|\Le
%\|B\|\,\|\sum_{n}^{N}\is{f}{Ag_{n}}h_{n}\|
%$$
%which makes the final conclusion.
%\end{proof}

Notice that $\ffc_{A}$ does not depend on $B$, this fact is related intimately to \eqref{1.17.01}.
Moreover if $\ffc_{\max}=\hhc$ then $\ffc_{A}=\dz{A^{*}}$. May it be  {\em sine qua non}?
%\begin{rem}\label{hfx}
%Everything which is done in this Subsection works well if we narrow the set $\ffc_{A}$ defined in \liczp 1 of Lemma \ref{2.24.11} to
%\begin{enumerate}
%\item[\liczp 3] 
%$\ffc_{A}'\okr\zb g{g\in\dz{ A^{*}}\;\,\&\:\,A^{*}g\in\ffc}\neq\{0\}$
%\end{enumerate}
%and replace $\ffc_{A} $  by $\ffc_{A}'$ wherever the previous appears, in particular  in \eqref{1.30.07}.
%\end{rem}

\subsection{Example}\label{4.30.12}
By way of illustration execute the procedure provided by Theorem \ref{2.13.05}, part (ii)  using the simplest possible example of an unbounded operator. More precisely, let $\hhc=\llc^{2}[0,1]$ and $A$ be defined by
    \begin{gather*}
   \dz{A}  =
\zb{f \in L^2[0,1]} {f \text{ is absolutely
continuous in  [0,1]},\; f' \in L^2[0,1] 
},
   \\
   Af = - \I f', \; f \in \dz{A}.
   \end{gather*}
   Its adjoint $A^{*}$ is (cf. \cite[Section 119]{riesz})
    \begin{gather*}
   \dz{A^{*}}  =
\zb{f \in L^2[0,1]} {f \text{ is absolutely
continuous in [0,1]},\; f' \in L^2[0,1] \text { and } f(0) =
f(1)=0},
   \\
   A^{*}f = - \I f', \; f \in \dz{A}.
   \end{gather*}
   This advises us to take as $B$ the Volterra operator
   \begin{equation*}%\label{1.09.12}
   \I (Bf)(x)\okr\int_{0}^{x}f(t)\D t, \quad f\in\llc^{2}[0,1],\;x\in[0,1].
   \end{equation*}
  Then \eqref{1.17.01} is satisfied. Therefore \eqref{7.01.09} looks like
  \begin{equation*}%\label{2.09.12}
 f=\sum_{n=0}^{\infty}\is{f}{g_{n}^{'}}\int_{0}^{\,\boldsymbol{\cdot}}h_{n}(t)\D t,\quad f\in\mathcal F
  \end{equation*}
 provided  $(g_{n})_{n=0}^{\infty}\subset \dz A$ (watch ``dot'' in the upper limit of the integral put there for making the formula formal). 
% Simplifying everything the most, if both $(g_{n})_{n=0}^{\infty}$ and $(g_{n}^{'})_{n=0}^{\infty}$ are orthonormal bases in $\hhc$ (one may try to fabricate such example using Chebyshev polynomials,  formula (9.8.45) in \cite{koe} helps in this). {Then} $\ffc_{A}=\dz {A^{*}}\neq \fmax=\hhc$.

\subsection{Operator valued measures associated to framings}\label{5x.30.12}
 Defining an operator\,\footnote{\;Sometimes it is denoted by $g\otimes h$.} $r_{g.h}$, with $g,h\in\hhc$, as
$$
r_{g,h}f\okr\is fg h,\quad f\in\ffc_{\max}
$$
we get a rank $1$ operator from $\ffc_{\max}$ to $\hhc$.
Given a framing
$(g_{n},h_{n})_{n=0}^{\infty}$, due to unconditional convergence of the series \eqref{2.01.09} any its subsequence is convergent and one can define an operator valued measure $F$ by
\begin{equation}\label{2.16.12}
F(\sigma)\okr\sum_{i\in\sigma}r_{g_{i},h_{i}},\quad \sigma\subset\{0,1,\ldots\}
\end{equation}
with the sum, if infinite, converging pointwisely in an unconditional way. $F(\sigma)$'s are linear operators in $\hhc$ with domains $\dz{F(\sigma)}=\ffc_{\max}$. Moreover $F(\{0,1,\ldots\})\subset I$ (notice the inclusion in the most interesting cases may be proper).

\begin{pro}\label{1.19.05}
Suppose \eqref{1.17.01} holds. If $F$ is the operator valued measure associated with the framing $(g_{n},h_{n})_{n=0}^{\infty}$ and $F_{A,B}$ that associated to the framing $(Ag_{n},Bh_{n})_{n=0}^{\infty}$ then for all $\sigma$'s
\begin{equation}\label{2.19.05}
F_{A,B}(\sigma)=BF(\sigma)A^{*}\;\text{ on }\;\ffc.
\end{equation}
with $\ffc$ as in \eqref{1.11.06}
\end{pro}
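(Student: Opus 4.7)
The plan is to unpack the two operator-valued measures and bridge them via closedness of $B$ applied to partial sums. Fix $\sigma\subset\{0,1,\ldots\}$ and $f\in\ffc$. Since $\ffc\subset\ffc_A$ by \eqref{1.11.06}, one has $f\in\dz{A^*}$ with $A^*f\in\ffc_{\max}$; together with $g_i\in\dz{A}$ this gives $\is{f}{Ag_i}=\is{A^*f}{g_i}$. Hence, unpacking $F_{A,B}$ and using that $(Ag_n,Bh_n)_{n=0}^{\infty}$ is a framing for $\ffc$ (the standing hypothesis, typically verified via Theorem \ref{2.13.05}),
\begin{equation*}
F_{A,B}(\sigma)f=\sum_{i\in\sigma}\is{f}{Ag_i}Bh_i=\sum_{i\in\sigma}\is{A^*f}{g_i}Bh_i
\end{equation*}
unconditionally. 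Simultaneously $A^*f\in\ffc_{\max}$ yields
\begin{equation*}
F(\sigma)A^*f=\sum_{i\in\sigma}\is{A^*f}{g_i}h_i,
\end{equation*}
also unconditionally.

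Next I pick any enumeration of $\sigma$ and form the truncations $u_N=\sum_{k=1}^{N}\is{A^*f}{g_{i_k}}h_{i_k}$, which are finite linear combinations of vectors $h_{i_k}\in\dz{B}$ and therefore lie in $\dz{B}$ with $Bu_N=\sum_{k=1}^{N}\is{A^*f}{g_{i_k}}Bh_{i_k}$. The two unconditional convergences above give $u_N\to F(\sigma)A^*f$ and $Bu_N\to F_{A,B}(\sigma)f$ in the norm of $\hhc$; the choice of enumeration is immaterial precisely because of unconditional convergence.

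The closing move is to invoke closedness of $B$: the existence of both limits forces $F(\sigma)A^*f\in\dz{B}$ with $BF(\sigma)A^*f=F_{A,B}(\sigma)f$, which is exactly \eqref{2.19.05}. The one potentially delicate spot is the convergence of $Bu_N$: since $B$ is unbounded this is not a formal consequence of the convergence of $u_N$, and it is precisely the framing property of $(Ag_n,Bh_n)_{n=0}^{\infty}$ that supplies it. Once that input is granted, closedness of $B$ carries the argument to its conclusion.
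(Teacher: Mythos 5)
Your argument is correct and rests on the same two pillars as the paper's proof: the finite-sum identity $\sum_{n\in\sigma}\is{A^{*}f}{g_{n}}Bh_{n}=B\sum_{n\in\sigma}\is{A^{*}f}{g_{n}}h_{n}$ (pure linearity of $B$ plus $\is{f}{Ag_{n}}=\is{A^{*}f}{g_{n}}$) and closedness of $B$ to handle infinite $\sigma$. The route differs in how the limit is taken: the paper checks the finite case in weak form, pairing against $h\in\hhc$, and then for infinite $\sigma$ invokes the Orlicz--Pettis theorem together with closedness of $B$; you instead work directly with norm convergence of the truncations $u_{N}$ and $Bu_{N}$, extracted from the unconditional (hence subseries) convergence already built into the definition of $\ffc_{\max}$ and of the measure \eqref{2.16.12}. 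This makes Orlicz--Pettis superfluous in your version and treats finite and infinite $\sigma$ uniformly. You are also more explicit than the paper about the hypothesis that actually carries the argument, namely that $\ffc$ lies in the maximal framing space of $(Ag_{n},Bh_{n})_{n=0}^{\infty}$, so that $F_{A,B}(\sigma)f$ is defined and $Bu_{N}$ converges; this is implicit in the statement of the proposition and is what Theorem \ref{2.13.05} supplies, as you note. Like the paper's computation, yours never uses \eqref{1.17.01} directly; it enters only through guaranteeing that the rescaled pair is a framing in the first place.
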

\begin{proof}
 It is convenient to check the equality \eqref{2.19.05} in the weak form. For finite $\sigma$'s it goes as follows:
 take $f\in\ffc$ and $h\in\hhc$, then, by \eqref{1.17.01},
 \begin{align*}
 \is{F_{A,B}(\sigma)f}h&=\is{\sum_{n\in\sigma}{\is f{Ag_{n}}}{Bh_{n}}}h=\is{B\sum_{n\in\sigma}{\is {A^{*}f}{g_{n}}}{h_{n}}}h=\is{BF(\sigma)A^{*}f}h.
 \end{align*}
If $\sigma$ is infinite  a combination of Orlicz-Pettis (or rather Pettis in this case) Theorem and closedness of $B$ completes the proof.
\end{proof}

 Proposition \ref{1.19.05} makes it unlikely in general the new framing becomes from the old one after rescaling.

%\uwam{this will be cleaned up later, in particular when more on dilations will be settled}
%\begin{remm}
%1. If $E$ the operator valued measure associated with the framing \eqref{2.01.09} then that associated to \eqref{7.01.09} is given by
%\begin{equation*}
%E_{A,B}(\sigma)=BE(\sigma)A^{*}.
%\end{equation*}
%This equality can be checked for finite $\sigma$'s directly, for infinite closedness of $B$ has to be used. For the same reason both $E$'s are  countably additive. This makes it unlikely in general the new framing becomes the old one after rescaling. 
%
%2. Going further, how much backwards one may go. More precisely, starting from  {\it pure framing} can we get by the procedure as above a frame or even a basis. In other words, can we go to the ``roots''?
%
%3.  \uwat{On the other hand $\ffc\neq\hhc$ unless $A$ is a bounded operator on $\hhc$. Introducing unbounded $A$'s like in rescaling makes sense. In general it may restrict the class on which framing is defined to $\ffc\cap A^{*}\ffc$. it looks like that is is related to general dilations which are defined not on the whole Hilbert space.}
%\end{remm}

\section*{General dilations}\label{66.31.12} 
%Let $\mathfrak S$ be a semigroup and $V$ a linear space. Given $\funk\varphi{\mathfrak S} V$, for $u\in \mathfrak S$ and $x\in V$ defined a function $\varphi_{u,x}$  by $\varphi_{u,x}\okr \varphi(su)x$, $s\in\mathfrak S$, and $W$ the linear span in  $\mathfrak S^{V}$ of all $\varphi_{u,x}$'s. Then 
%\begin{equation}\label{1.03.08}
%\varPhi(s)\varphi_{u,x}\okr \varphi_{su,x},\quad u\in\mathfrak S,\,x\in 
%\end{equation}
%

 \subsection{General dilation {\em \`a la} B. Sz.--Nagy: algebraic part}\label{sto}
 
 Let $\mathfrak S$ be a semigroup\,\footnote{\,It is customary  to write the semigroup action  in the multiplicative way unless the semigroup is commutative, in  this case the additive notation is used.} with unit $1$, and let $\eec$ and $\ffc$ be two linear spaces. Given\,\footnote{\;$\bf L$ indicates linearity of the objects involved.} $\funk\varphi{\ssf} {\bf L(\ffc,\eec)}$, for $\mathfrak s\in \mathfrak S$ and $f\in \ffc$  the function $\varphi_{\ssfm,f}$  defined by 
 \begin{equation}\label{13.30.12}
 \varphi_{\ssfm,f}(\ttfm)\okr \varphi(\ttfm\ssfm)f,\quad \ttfm\in\mathfrak S
 \end{equation}
  which is a mapping of $\ssf$ into $\eec$; in other words $\varphi_{\ssfm,x}$ are members of $\eec^{\ssf}$.
 
   For any finite choice of $\xi_{i}$'s, $f_{i}$'s and $\ssfm_{i}$'s
\begin{equation*}
\sum_{i}\xi_{i}\varphi_{\ssfm_{i},f_{i}}=0\; \Longrightarrow\; \sum_{i}\xi_{i}\varphi_{\uufm\ssfm_{i},f_{i}}=0.
\end{equation*}
Indeed, $\sum_{i}\xi_{i}\varphi_{\ssfm_{i},f_{i}}=0$ means $\sum_{i}\xi_{i}\varphi_{\ssfm_{i},f_{i}}(\ttfm)=\sum_{i}\xi_{i}\varphi({\ttfm\ssfm_{i})f_{i}}=0$ for every $\ttfm\in\mathfrak S$. Therefore  $\sum_{i}\xi_{i}\varphi({\ttfm\uufm\ssfm_{i})f_{i}}=0$ for every $\ttfm\in\mathfrak S$ as well, this reads in turn as $\sum_{i}\xi_{i}\varphi_{\uufm\ssfm_{i},f_{i}}=0$. Consequently, $\varPhi(\uufm)$ acting as  
\begin{equation}\label{1.03.08}
\varPhi(\uufm)\varphi_{\ssfm,f}\okr \varphi_{\uufm\ssfm,f},\quad \ssfm\in\mathfrak S,\,f\in \ffc,
\end{equation}
is well defined as a linear operator on $\ddc$; the latter stands for the linear span $\lin\{\varphi_{\ssfm,f}\}_{\ssfm\ssf,f\in\ffc}$ considered in  $\eec^{\ssf}$. Notice that because $\varphi_{\ssfm,f}$ is linear in $f$ the space $\ddc$ is composed of the finite sums of $\varphi_{\ssfm,f}$'s.

The  operator $\funk T {\ffc} {\ddc}$ defined by 
\begin{equation}\label{1.27.12}
Tf\okr\varphi_{1,f}
\end{equation}
 is linear. On the other hand extending the definition 
 \begin{equation}\label{2.27.12}
 S\varphi_{\ssfm,f}\okr\varphi(\ssfm)f
 \end{equation}
  by linearity requires some argument. In order it to work we need to know that
$$
\sum_{i}\xi_{i}\varphi_{\ssfm_{i},f_{i}}=0\; \Longrightarrow\; \sum_{i}\xi_{i}\varphi(\ssfm_{i})f_{i}=0.
$$
Indeed, $\sum_{i}\xi_{i}\varphi_{\ssfm_{i},f_{i}}=0$ implies $\sum_{i}\xi_{i}\varphi(\ssfm_{i})f_{i}=\sum_{i}\xi_{i}\varphi_{\ssfm_{i},f_{i}}(1)=0$. Thus $ S\in{\bf L}(\ddc,\eec)$.

All this leads us to the following result.
\begin{thm}\label{1.08.08}
Suppose $\ssf$ is a semigroup with a unit, $\ffc$ a linear space. Referring to the construction done above we have
\begin{enumerate}
\item [(a)] the mapping, cf. \eqref{1.03.08},
$$\funkc  \varPhi {\mathfrak S} \uufm{\varPhi(\uufm)}{\bf L(\ddc)}$$
is a  unital semigroup homomorphism, in particular if $\ssf$ is commutative then so is $\zb{\varPhi(\uufm)}{\uufm\in\ssf}$, the range of $\varPhi$;
\item[(b)] the dilation formula
\begin{equation}\label{2.08.08}
\varphi(\uufm)=S\varPhi(\uufm)T,\quad \uufm\in\ssf
\end{equation}
holds;
\item[(c)] the minimality condition
\begin{equation}\label{3.08.08}
\ddc=\lin\zb{\varPhi(\uufm)Tf}{\uufm\in\ssf,\, f\in\ffc}
\end{equation}
holds as well;
\item[(d)] if, in addition, $\ssf$ is an algebra and $\varphi$ is linear then $\varPhi$ must necessarily be an algebra homomorphism.
\end{enumerate}
\end{thm}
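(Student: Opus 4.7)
The overall strategy is that each of (a)--(d) can be verified by checking the claimed identity on a single generator $\varphi_{\ssfm,f}$ of $\ddc$ and then invoking linearity; the genuinely delicate point, namely that the prescriptions \eqref{1.03.08} and \eqref{2.27.12} respect the linear relations among the $\varphi_{\ssfm,f}$, has already been settled in the text preceding the statement, so the theorem itself is essentially a matter of unwinding the definitions.

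For (a), my plan is first to read off $\varPhi(\uufm)\varPhi(\vvfm)\varphi_{\ssfm,f} = \varphi_{\uufm\vvfm\ssfm,f} = \varPhi(\uufm\vvfm)\varphi_{\ssfm,f}$ from two applications of \eqref{1.03.08} together with associativity in $\ssf$, extend by linearity to $\ddc$, and then record $\varPhi(1)\varphi_{\ssfm,f} = \varphi_{1\ssfm,f} = \varphi_{\ssfm,f}$. Commutativity of the range $\{\varPhi(\uufm)\}_{\uufm\in\ssf}$ when $\ssf$ is commutative is then automatic from $\uufm\vvfm=\vvfm\uufm$.

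For (b), the dilation formula \eqref{2.08.08} drops out of the computation $S\varPhi(\uufm)Tf = S\varPhi(\uufm)\varphi_{1,f} = S\varphi_{\uufm,f} = \varphi(\uufm)f$, in which \eqref{1.27.12}, \eqref{1.03.08}, and \eqref{2.27.12} are used in that order. For (c), the single identity $\varphi_{\ssfm,f} = \varPhi(\ssfm)\varphi_{1,f} = \varPhi(\ssfm)Tf$ rewrites every spanning element of $\ddc$ in the form $\varPhi(\uufm)Tf$, so minimality \eqref{3.08.08} follows from the very definition of $\ddc$ as a linear span.

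Part (d) is the one that genuinely uses new hypotheses. Assuming $\ssf$ is an algebra and $\varphi$ is linear, I would verify that $\uufm\mapsto \varPhi(\uufm)$ respects linear combinations by evaluating at an arbitrary $\ttfm\in\ssf$:
$$
(\varPhi(\alpha\uufm+\beta\vvfm)\varphi_{\ssfm,f})(\ttfm) = \varphi(\ttfm(\alpha\uufm+\beta\vvfm)\ssfm)f = \alpha\varphi(\ttfm\uufm\ssfm)f + \beta\varphi(\ttfm\vvfm\ssfm)f,
$$
where distributivity in $\ssf$ and linearity of $\varphi$ are both invoked; the right-hand side coincides with $((\alpha\varPhi(\uufm)+\beta\varPhi(\vvfm))\varphi_{\ssfm,f})(\ttfm)$. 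Combined with (a), this yields the algebra homomorphism property. The principal obstacle, then, is really not in proving the theorem but in having arranged the preceding well-definedness arguments for $\varPhi(\uufm)$ and $S$ precisely so that these one-line verifications succeed; once those are in hand, nothing further is required.
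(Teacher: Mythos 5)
Your proposal is correct and follows essentially the same route as the paper: all four parts are verified on the generators $\varphi_{\ssfm,f}$ via the defining formulas \eqref{13.30.12}, \eqref{1.03.08}, \eqref{1.27.12}, \eqref{2.27.12} and then extended by linearity, with the well-definedness of $\varPhi(\uufm)$ and $S$ already secured in the preceding discussion. Your computation for (b) in fact lands correctly on $\varphi(\uufm)f$, and your explicit pointwise check of linearity in (d) merely spells out what the paper compresses into one line.
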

\begin{proof}
Use \eqref{1.03.08} directly to check $\varPhi$ is a semigroup homomorphism. To prove \eqref{2.08.08} use \eqref{1.03.08}, definitions of $T$ and $S$, and write
$$S\varPhi(\uufm)Tf\stackrel{\eqref{1.27.12}}=S\varPhi(\uufm)\varphi_{1,f}\stackrel{\eqref{1.03.08}}=S\varphi_{\uufm,f}\stackrel{\eqref{2.27.12}}=\varphi_{\uufm,
f}.$$
Condition \eqref{3.08.08} is an immediate consequence of the definition of $\ddc$ as $\varPhi(\uufm)\varphi_{1.f}=\varphi_{\uufm,f}$.

If $\varphi$ is linear so is $\ssfm\mapsto\varphi_{\ssfm,f}$ and, consequently so is $\varPhi$. This establishes (d).
\end{proof}

%\begin{align*}
%a
%\end{align*}

Call any triplet $(\varPhi,S,T)$, or sometimes $\varPhi$ itself, satisfying \eqref{2.08.08} in condition (a) and (b) of Theorem \ref{1.08.08} a {\em dilation} of $\varphi$; it is called {\em minimal} if \eqref{3.08.08} in (c) is satisfied. 

%\begin{rem}\label{1.09.08}
% If \,$(\varPhi', S',T')$ is another minimal dilation then 
% $\varPhi^{'}(\uufm)T^{'}f=\varphi_{\uufm,f}=\varPhi(\uufm)Tf$, which means it must necessarily coincide with that constructed in Theorem \ref{1.08.08}.
% \end{rem}

Theorem \ref{1.08.08}   extract the algebraic component of Th\'eor\`eme Principal of \cite{app} and generalize Proposition 4.1 of \cite{hlll} by the way.

	\subsection{General dilation {\em \`a la} B. Sz.--Nagy: topological part}\label{7.30.12} 
	Now introduce some, rather simple, topology in $\ddc$ into the game. If $\eec$ is a topological linear space, then, because $\ddc\subset\eec^{\ssf}$ the topology in $\eec$ determines that of Tikhonov  in $\ddc$; in particular if $\eec$ is locally convex so is $\ddc$. The Tikhonov topology makes the dilation operators automatically continuous.

\begin{pro}\label{1.29.12} 
Suppose $\eec$ is a topolgical linear space. If $\varPhi$ is a minimal dilation of $\varphi$ constructed according to the recipe above, then   each operator $\varPhi(\uufm)$, $\uufm\in\ssf$, is continuous in the Tikhonov topology of $\ddc$. 
\end{pro}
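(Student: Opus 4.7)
My plan is to unpack what the Tikhonov topology on $\ddc\subset\eec^{\ssf}$ actually requires, and then exhibit each $\varPhi(\uufm)$ as nothing more than a reindexing of coordinates, which is automatically continuous for the product topology. Concretely, a net $\psi_\alpha\to\psi$ in $\ddc$ (or in $\eec^{\ssf}$) if and only if $\psi_\alpha(\ttfm)\to\psi(\ttfm)$ in $\eec$ for every $\ttfm\in\ssf$; equivalently, the evaluation maps $\mathrm{ev}_\ttfm\colon\psi\mapsto\psi(\ttfm)$ are a defining family of continuous maps into $\eec$. So it suffices to show that for each fixed $\uufm,\ttfm\in\ssf$ the map $\mathrm{ev}_\ttfm\circ\varPhi(\uufm)\colon\ddc\to\eec$ coincides with some evaluation $\mathrm{ev}_{\ttfm'}$.

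The core computation is the identity
\begin{equation*}
(\varPhi(\uufm)\psi)(\ttfm)=\psi(\ttfm\uufm),\qquad \psi\in\ddc,\;\ttfm,\uufm\in\ssf.
\end{equation*}
On a generator $\psi=\varphi_{\ssfm,f}$ this follows by direct unwinding of \eqref{1.03.08} and \eqref{13.30.12}:
\begin{equation*}
(\varPhi(\uufm)\varphi_{\ssfm,f})(\ttfm)=\varphi_{\uufm\ssfm,f}(\ttfm)=\varphi(\ttfm\uufm\ssfm)f=\varphi_{\ssfm,f}(\ttfm\uufm).
\end{equation*}
Since $\ddc$ is, by definition, the linear span of such $\varphi_{\ssfm,f}$'s and both sides are linear in $\psi$, the identity extends to all of $\ddc$.

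With this identity in hand the continuity claim is immediate: $\mathrm{ev}_\ttfm\circ\varPhi(\uufm)=\mathrm{ev}_{\ttfm\uufm}$ is continuous as a coordinate projection on $\eec^{\ssf}$, and since this holds for every $\ttfm\in\ssf$, the map $\varPhi(\uufm)\colon\ddc\to\ddc\subset\eec^{\ssf}$ is continuous in the Tikhonov topology by the universal property of the product topology. I anticipate no genuine obstacle; the only point that deserves explicit mention is that minimality of the dilation is not actually used in this step, and that no additional topological hypotheses on $\eec$ (such as local convexity) are needed here — the pointwise character of the product topology alone does the work.
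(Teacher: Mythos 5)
Your proof is correct and is exactly the argument the paper has in mind; the paper's own proof is only the one-line instruction ``write down explicitly the topology involved and use the appropriate definitions,'' and your identity $(\varPhi(\uufm)\psi)(\ttfm)=\psi(\ttfm\uufm)$ together with the universal property of the product topology is the honest execution of that instruction. Your side remarks --- that minimality is not actually used and that no local convexity is needed --- are also accurate.
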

\begin{proof}
Write down explicitly the topology involved and use the appropriate definitions. That is all.
\end{proof}

\begin{thm}\label{3.18.12}
Suppose $\eec$ is a topological linear space and $\varPhi$ is a minimal  dilation of $\varphi$. If $(\ssfm_{\alpha})_{\alpha}$ is net such that for any $\ttfm,\uufm\in\ssf$
\begin{equation*}%\label{1.18.12}
\varphi(\ttfm\ssfm_{\alpha}\uufm)f\rightarrow\varphi(\ttfm\ssfm\uufm)f,\quad f\in\ffc
\end{equation*}
then 
\begin{equation*}%\label{2.18.12}
\varPhi(\ssfm_{\alpha})w\rightarrow\varPhi(\ssfm)w,\quad w\in \ddc
\end{equation*}
where convergence is that in the Tikhonov topology inherited by $\ddc$.
\end{thm}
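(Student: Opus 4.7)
The plan is to unravel the Tikhonov topology on $\ddc$ and reduce the claim to the hypothesis by invoking minimality. Since $\ddc\subset\eec^{\ssf}$ carries the product topology, a net $w_{\alpha}$ in $\ddc$ converges to $w$ if and only if $w_{\alpha}(\ttfm)\to w(\ttfm)$ in $\eec$ for every $\ttfm\in\ssf$. Accordingly, I reduce the problem to showing, for each fixed $w\in\ddc$ and each $\ttfm\in\ssf$, that $(\varPhi(\ssfm_{\alpha})w)(\ttfm)\to(\varPhi(\ssfm)w)(\ttfm)$ in $\eec$.

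By the minimality condition \eqref{3.08.08} of Theorem \ref{1.08.08}, every $w\in\ddc$ is a \emph{finite} linear combination $w=\sum_{i}\xi_{i}\varphi_{\uufm_{i},f_{i}}$ with $\uufm_{i}\in\ssf$ and $f_{i}\in\ffc$. Linearity of $\varPhi(\ssfm_{\alpha})$ and of point-evaluation at $\ttfm$ let me deal with each summand separately, so it suffices to verify convergence for generators of the form $w=\varphi_{\uufm,f}$.

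For such a generator, \eqref{1.03.08} gives $\varPhi(\ssfm_{\alpha})\varphi_{\uufm,f}=\varphi_{\ssfm_{\alpha}\uufm,f}$, and evaluation at $\ttfm$ via \eqref{13.30.12} yields
$$
(\varPhi(\ssfm_{\alpha})\varphi_{\uufm,f})(\ttfm)=\varphi_{\ssfm_{\alpha}\uufm,f}(\ttfm)=\varphi(\ttfm\ssfm_{\alpha}\uufm)f.
$$
The hypothesis applied to this specific pair $\ttfm,\uufm$ says precisely that $\varphi(\ttfm\ssfm_{\alpha}\uufm)f\to\varphi(\ttfm\ssfm\uufm)f$ in $\eec$, and the limit coincides with $(\varPhi(\ssfm)\varphi_{\uufm,f})(\ttfm)$ by the same computation with $\ssfm$ in place of $\ssfm_{\alpha}$.

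I expect no genuine obstacle: the argument is essentially a bookkeeping exercise aligning the definition of the Tikhonov topology with the form of the hypothesis. The point worth underlining is that the two-sided form $\varphi(\ttfm\ssfm_{\alpha}\uufm)f$ in the assumption is not wasted generality — the outer $\ttfm$ absorbs the point-evaluation demanded by the product topology, while the inner $\uufm$ absorbs the index supplied by the minimality decomposition, and both have to act simultaneously for the reduction to the generators to go through.
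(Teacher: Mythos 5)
Your proof is correct and follows essentially the same route as the paper's: evaluate $\varPhi(\ssfm_{\alpha})\varphi_{\uufm,f}$ at $\ttfm$ via \eqref{1.03.08} and \eqref{13.30.12} to reduce the claim to the hypothesis. You merely make explicit the reduction to generators by linearity and the pointwise nature of the Tikhonov topology, which the paper leaves implicit.
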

\begin{proof}
Using \eqref{1.03.08} and \eqref{13.30.12} there and back
we can write
$$
\varPhi(\ssfm_{\alpha})\varphi_{\uufm,f}(\ttfm)=\varphi_{\ssfm_{\alpha}\uufm,f}(\ttfm)=\varphi(\ttfm\ssfm_{\alpha}\uufm)f\rightarrow\varphi(\ttfm\ssfm\uufm)f=\varphi_{\ssfm\uufm,f}(\ttfm)=\varPhi(\ssfm)\varphi_{\ssfm,f}(\ttfm)
$$
which makes the conclusion.
\end{proof}
Theorem \ref{3.18.12} is in tune with part 3) of Th\'eor\`eme Principal of \cite{app} fitting in our general so far situation.

\subsection{Na\u{\i}mark-like dilation}\label{8.30.12}
All this done so far enables us to propose a Na\u{\i}mark-like dilation by specifying more $\ssf$. 
\begin{cor}\label{2a.08.08}
Let $\ssf$ be a family of subsets of a set $X$, which is closed for intersections, with $X\in\ssf$. Then the range of any multiplicative dilation $\varPhi$ is composed of commuting idempotents. 

Moreover, if $\ssf$ is a $\sigma$--algebra of subsets of $X$ and $\varphi$ is $\sigma$--additive in the strong  topology of $\eec$ then $\varPhi$ is $\sigma$--additive in the Tikhonov topology.
\end{cor}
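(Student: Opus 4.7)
The plan is to handle each of the two assertions by reducing directly to the machinery already built. For the first assertion, the key observation is that $\ssf$ under intersection is a commutative semigroup with unit $X$ in which every element is idempotent, since $\sigma\cap\sigma=\sigma$. Invoking Theorem \ref{1.08.08}(a), which yields a unital semigroup homomorphism and commutativity in the commutative case, one reads off both $\varPhi(\sigma)^{2}=\varPhi(\sigma\cap\sigma)=\varPhi(\sigma)$ and $\varPhi(\sigma)\varPhi(\tau)=\varPhi(\sigma\cap\tau)=\varPhi(\tau)\varPhi(\sigma)$ at once.

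For the second assertion my strategy is to split $\sigma$-additivity into \emph{finite} additivity plus convergence of the partial unions. First I verify finite additivity of $\varPhi$ on the spanning vectors $\varphi_{\ssfm,f}$ of $\ddc$: for disjoint $\sigma_{1},\sigma_{2}\in\ssf$, the sets $\ttfm\cap\sigma_{1}\cap\ssfm$ and $\ttfm\cap\sigma_{2}\cap\ssfm$ are disjoint, and finite additivity of $\varphi$ (automatic from its $\sigma$-additivity) gives
\[
\varPhi(\sigma_{1}\cup\sigma_{2})\varphi_{\ssfm,f}(\ttfm)=\varphi\bigl(\ttfm\cap(\sigma_{1}\cup\sigma_{2})\cap\ssfm\bigr)f=\bigl(\varPhi(\sigma_{1})+\varPhi(\sigma_{2})\bigr)\varphi_{\ssfm,f}(\ttfm),
\]
which extends to all of $\ddc$ by linearity.

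Next I upgrade to $\sigma$-additivity via Theorem \ref{3.18.12}. Given pairwise disjoint $(\sigma_{n})\subset\ssf$ with union $\sigma$, put $\tau_{N}\okr\bigcup_{n\Le N}\sigma_{n}\in\ssf$. For fixed $\ttfm,\uufm\in\ssf$ and $f\in\ffc$ the sets $\ttfm\cap\sigma_{n}\cap\uufm$ are pairwise disjoint with union $\ttfm\cap\sigma\cap\uufm$, so the hypothesised strong $\sigma$-additivity of $\varphi$ yields
\[
\varphi(\ttfm\cap\tau_{N}\cap\uufm)f=\sum_{n=1}^{N}\varphi(\ttfm\cap\sigma_{n}\cap\uufm)f\longrightarrow\varphi(\ttfm\cap\sigma\cap\uufm)f.
\]
Theorem \ref{3.18.12}, applied with $\ssfm_{\alpha}=\tau_{N}$ and $\ssfm=\sigma$, then gives $\varPhi(\tau_{N})w\to\varPhi(\sigma)w$ in the Tikhonov topology for every $w\in\ddc$, and the finite additivity from the previous step rewrites this as $\sum_{n\Le N}\varPhi(\sigma_{n})w\to\varPhi(\sigma)w$, which is exactly $\sigma$-additivity of $\varPhi$.

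No step looks like a genuine obstacle; the only mild care needed is in interpreting the semigroup operation inside Theorem \ref{3.18.12} as intersection, and in distributing $\ttfm\cap(\cdot)\cap\uufm$ across an increasing union so that the strong $\sigma$-additivity of $\varphi$ is directly applicable. Everything else is routine book-keeping on the spanning vectors $\varphi_{\ssfm,f}$.
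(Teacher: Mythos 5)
Your proof is correct and follows essentially the same route as the paper: the first part is read off from Theorem \ref{1.08.08}(a) after viewing $\ssf$ as a commutative unital semigroup under intersection (unit $X$, every element idempotent), and the second part is exactly the intended application of Theorem \ref{3.18.12} to the net of partial unions $\tau_{N}$. The paper's own proof of the second assertion is a one-line citation of Theorem \ref{3.18.12}; your write-up merely supplies the finite-additivity bookkeeping that the paper leaves implicit.
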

\begin{proof}
Consider $\ssf$ as a semigroup with the semigroup multiplication being the intersection of sets; notice $X$ is a unit of $\ssf$. Then $\varPhi(\uufm)$'s are idempotents and, because $\ssf$ is commutative, they commute.

The second part of Corollary come from Theorem \ref{3.18.12}.
\end{proof}

For a first aid on unbounded idempotents we suggest \cite{ota}.

\section*{Positive definite dilations}
\subsection{General, not necessarily bounded  case}\label{9.30.12}
Suppose now  $\eec$ is an inner product space and $\ffc$ is a subspace of $\eec$. Suppose, moreover, $\ssf$ is a $\ast$--semigroups (or an involution semigroup in other words), with unit of course. Under these circumstances one can think of positive definiteness of $\varphi$: we say $\varphi$ is {\em positive definite} if
\begin{equation*}
\sum_{i,j}\is{\varphi(\ssfm^{*}_{j}\ssfm_{i})f_{i}}{f_{j}}_{\eec}\Ge0
\end{equation*}
for any finite choice of $(s_{k})_{k}\subset\ssf$ and $(f_{k})_{k}\subset\ffc$. Two important consequences are at hand
\begin{equation}
\is{\varphi(\ssfm^{*})f}g_{\eec}=\is f{\varphi(\ssfm)g}_{\eec},\quad \ssfm\in\ssf,\;f,g\in\ffc,\label{1.10.01}
\end{equation}
\begin{multline}
\big|\sum_{i,k}\is{\varphi(\ttfm^{*}_{k}\ssfm_{i})f_{i}}{g_{k}}_{\eec}\big|^{2} \Le
\sum_{i,j}\is{\varphi(\ssfm^{*}_{j}\ssfm_{i})f_{i}}{f_{j}}_{\eec}\sum_{k,l} \is{\varphi(\ttfm^{*}_{l}\ttfm_{k})g_{k}}{g_{l}}_{\eec},\\ (\ssfm_{i},\ttfm_{k})_{i,k}\subset\ssf,\; (f_{i},g_{k})_{i,k}\subset\ffc. \label{2.07.01}
\end{multline}
Let us try to set a (provisional so far) definition of an inner product in $\ddc$ (cf. Subsection 
\ref{sto}) by
\begin{equation}\label{1.07.01}
\is{\varphi_{\ssfm,f}}{\varphi_{\ttfm,g}}_{\ddc}\okr\is{\varphi(\ttfm^{*}\ssfm)f}g_{\eec}
\end{equation}
The Schwarz inequality \eqref{2.07.01} allows to extend the definition \eqref{1.07.01} by linearity with respect to the first variable as 
\begin{equation}\label{1x.07.01}
\is{\sum_{i}\varphi_{\ssfm_{i},f_{i}}}{\varphi_{\ttfm,g}}_{\ddc}\okr\sum_{i}\is{\varphi(\ttfm^{*}\ssfm_{i})f_{i}}g_{\eec}.
\end{equation}
Indeed, equal in \eqref{2.07.01} all the $g_{k}$'s 0 except one. Assuming $\sum_{i}\varphi_{\ssfm_{i},f_{i}}=0$ makes the extension possible. Act with respect to the second variable likewise getting $\is{\,\cdot\,}{\,-\,}_{\ddc}$ to be a semi-inner product. To prove it is an inner product notice that \eqref{1x.07.01} can be continued as follows. Take $\sum_{i}\varphi_{\ssfm_{i},f_{i}}$ such that $\is{\sum_{i}\varphi_{\ssfm_{i},f_{i}}}{\sum_{i}\varphi_{\ssfm_{i},f_{i}}}_{\ddc}=0$, apply the Schwarz inequality \eqref{2.07.01} to the right hand side of \eqref{1.07.01} so as to get
\begin{equation*}
\is{\sum_{i}\varphi_{\ssfm_{i},f_{i}}}{\varphi_{\ttfm,g}}_{\ddc}=0
\end{equation*}
for any $\ttfm$ and $g$. Because the right hand side of  \eqref{1.07.01} is  $\sum_{i}\varphi_{s_{i},f_{i}}(\ttfm^{*})$ we get it to be $0$ for every $t^{*}$ which completes the argument.

Now it is time to think of a completion of $\ddc$. Whatever the way to achieve it is denote the resulting space by $\hhc$.

Referring to Theorem \ref{1.08.08} the above can be summarized in the following.
\begin{cor}\label{2.10.01}
Suppose $\ssf$ is a $\ast$--semigroup with a unit, $\ffc$ a linear space, $\eec$ an inner product space and $\varphi$ is positive definite. Then $\ddc$ becomes an inner product space with the inner product extending sesquilineary that given by \eqref{1.07.01}. In addition to conditions {\em (a)\,--\,(d)} of \,{\em Theorem \ref{1.08.08}} the mapping $\varPhi$ is an $\ast$--semigroup homomorphism, that is
\begin{equation}\label{3.01.13}
\is{\varPhi(\uufm^{*})\varphi}{\psi}_{\ddc}=\is{\varphi}{\varPhi(\uufm)\psi}_{\ddc},\quad \varphi,\psi\in\ddc.
\end{equation}
\end{cor}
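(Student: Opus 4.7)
The statement splits cleanly into two parts: first that $\ddc$ inherits an inner product from \eqref{1.07.01}, and second that $\varPhi$ satisfies the $\ast$-semigroup homomorphism identity \eqref{3.01.13}.

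For the inner product part, almost all the work is done in the paragraph preceding the corollary, so my task reduces to tying up the loose ends. I would note that the sesquilinear extension of \eqref{1.07.01} is well defined: specializing \eqref{2.07.01} by setting all $g_k$ but one equal to zero shows that if $\sum_i \varphi_{\ssfm_i, f_i} = 0$ in $\ddc$, then $\sum_i \is{\varphi(\ttfm^{*}\ssfm_i) f_i}{g}_{\eec} = 0$, so the right-hand side of \eqref{1x.07.01} depends only on the vector, not on its representation. Acting in the second slot likewise yields a sesquilinear form. Positive definiteness then follows by one more application of the Schwarz inequality \eqref{2.07.01}, exactly as indicated in the preamble to the corollary: a vector with zero self-inner product is orthogonal to every generator $\varphi_{\ttfm, g}$, hence orthogonal to all of $\ddc$, and thus vanishes.

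For \eqref{3.01.13} the plan is to verify the identity on the spanning vectors $\varphi_{\ssfm, f}$, $\varphi_{\ttfm, g}$ and then extend by sesquilinearity. Using \eqref{1.03.08} one gets $\varPhi(\uufm^{*})\varphi_{\ssfm, f} = \varphi_{\uufm^{*}\ssfm, f}$ on the left and $\varPhi(\uufm)\varphi_{\ttfm, g} = \varphi_{\uufm\ttfm, g}$ on the right, and \eqref{1.07.01} rewrites both inner products as values of the form $\is{\varphi(\,\cdot\,)f}{g}_{\eec}$. Associativity in $\ssf$ together with the anti-homomorphism property $(\uufm\ttfm)^{*} = \ttfm^{*}\uufm^{*}$ of the involution then makes them coincide, both equal to $\is{\varphi(\ttfm^{*}\uufm^{*}\ssfm)f}{g}_{\eec}$. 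Since both sides of \eqref{3.01.13} are sesquilinear in $(\varphi, \psi)$, the identity immediately lifts from generators to all of $\ddc$.

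I do not anticipate any serious obstacle; this is the standard verification that a GNS-type dilation automatically intertwines the $\ast$-operation with the adjoint. The only care needed is bookkeeping around the pre-multiplication convention in \eqref{1.03.08}, which dictates on which side $\uufm$ and $\uufm^{*}$ appear before they get absorbed into the argument of $\varphi$ through \eqref{1.07.01}; keeping the order straight is what makes the two sides collapse to the same element of $\eec$.
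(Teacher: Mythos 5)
Your proposal is correct, and its overall architecture matches the paper's: the inner-product structure on $\ddc$ is delegated to the discussion preceding the corollary (well-definedness of the sesquilinear extension via the Schwarz inequality \eqref{2.07.01}, definiteness via one more application of it), and \eqref{3.01.13} is checked on the generators $\varphi_{\ssfm,f}$ and then lifted by sesquilinearity. Where you genuinely diverge is in the generator-level computation. The paper's chain runs
$\is{\varPhi(\uufm^{*})\varphi_{\ssfm,f}}{\varphi_{\ttfm,g}}_{\ddc}
=\is{\varphi(\ttfm^{*}\uufm^{*}\ssfm)f}{g}_{\eec}
=\overline{\is{\varphi(\ssfm^{*}\uufm\ttfm)g}{f}}_{\eec}
=\is{\varphi_{\ssfm,f}}{\varPhi(\uufm)\varphi_{\ttfm,g}}_{\ddc}$,
i.e.\ it passes through the Hermitian symmetry \eqref{1.10.01} of the positive definite form and the conjugate symmetry of $\is{\,\cdot\,}{\,-\,}_{\ddc}$. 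You instead unfold \emph{both} sides directly through \eqref{1.03.08} and \eqref{1.07.01} and observe that each is literally $\is{\varphi(\ttfm^{*}\uufm^{*}\ssfm)f}{g}_{\eec}$, using only associativity and $(\uufm\ttfm)^{*}=\ttfm^{*}\uufm^{*}$. Both computations are valid; yours is shorter and isolates the fact that, once the form \eqref{1.07.01} is known to be a well-defined inner product, identity \eqref{3.01.13} on generators is a purely formal consequence of the definition and the involution axioms, with \eqref{1.10.01} not needed at that step (it is of course already embedded in the proof that the form is Hermitian). The paper's route makes the role of positive definiteness more visible but costs an extra conjugation back and forth.
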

\begin{proof}
The only thing which requires some proof is \eqref{3.01.13}. Invoking  \eqref{1.10.01} it goes as as follows
\begin{align*}
\is{\varPhi(\uufm^{*})\varphi_{\ssfm,f}}{\varphi_{\ttfm,g}}_{\ddc}&\stackrel{\eqref{1.03.08}}=\is{\varphi_{\uufm^{*}\ssfm,f}}{\varphi_{\ttfm,g}}_{\ddc}\!\stackrel{\eqref{1.07.01}}
=\is{\varphi(\ttfm^{*}\uufm^{*}\ssfm)f)}{g}_{\ddc}
\\
&\stackrel{\eqref{1.10.01}}=\overline{\is{\varphi(\ssfm^{*}\uufm\ttfm)g}f}_{\eec}\stackrel{\eqref{1.07.01}}=\overline{\is{\varphi_{\uufm\ttfm,g}}{\varphi_{\ssfm,f}}}_{\ddc}\stackrel{\eqref{1.03.08}}=\overline{\is{\varPhi(\uufm)\varphi_{\ttfm,g}}{\varphi_{\ssfm,f}}}_{\ddc}\\
&=\is{\varphi_{\ssfm,f}}{\varPhi(\uufm)\varphi_{\ttfm,g}}.\qedhere
\end{align*}
\end{proof}
Corollary \ref{2.10.01} is the main step in establishing Sz.-Nagy's  Th\'eor\`eme Principal with the major ingredients of its proof being reorganized. It is in fact what is needed for dilations (extensions) of unbounded operators with invariant domain, cf. \cite{nagy}. The next step, boundedness of the dilating operators is discussed below.
\begin{rem}
\label{1.01.01}
Notice that
$$
\is{S\varphi_{\ssfm,f}}g_{\eec}\stackrel{\eqref{2.27.12}}=\is{\varphi(\ssfm)f)}g_{\eec}\stackrel{\eqref{1.07.01}}=\is{\varphi_{\ssfm,f}}{\varphi_{1,g}}_{\ddc}\stackrel{\eqref{1.27.12}}=\is{\varphi_{\ssfm,f}}{Tg}_{\ddc}
$$
which means $S$ and $T$ are adjoint in $\ddc$ each to the other or formally adjoint in $\kkc$.

If $\is{\varphi(1)f}f_{\eec}=\|f\|^{2}_{\eec}$, $f\in\eec$, then $\|Tf\|_{\ddc}=\|\varphi_{1,f}\|_{\ddc}=\|f\|_{\eec}$ for all $f$'s which means $T$ is an isometry. If, instead, $\is{\varphi(1)f}f_{\eec}\Le c\|f\|^{2}_{\eec}$, $T$ is just bounded.
\end{rem}

\begin{rem}\label{1.09.08}
At this stage a typical  uniqueness assertion for minimal dilations can be proved in a standard way.
  \end{rem}

\subsection{Boundedness}\label{10.30.12}
For $\uufm\in\ssf$ consider two conditions
\begin{equation}
\sum_{i,j}\is{\varphi(\ssfm^{*}_{j}\uufm^{*}\uufm\ssfm_{i})f_{i}}{f_{j}}_{\eec}\Le c(\uufm)
\sum_{i,j}\is{\varphi(\ssfm^{*}_{j}\ssfm_{i})f_{i}}{f_{j}}_{\eec},\quad 
(s_{k})_{k}\subset\ssf\;. (f_{k})_{k}\subset\eec;
\tag{$\alpha$}
\end{equation}
\begin{equation}
\|\varphi(\uufm)f\|_{\eec}\Le d(f)s(\uufm),\quad s \text{ submultiplicative.}\tag{$\beta$}
\end{equation}
If $\varphi$ is positive definite then ($\alpha$) and ($\beta$)  are among the equivalent conditions for boundedness of $\varPhi(\uufm)$, cf. \cite{pams} or \cite{ark}, also \cite{murphy} for much more general context. Condition ($\alpha$) is the original boundedness condition appearing in \cite{app}.

This and Remark \ref{1.01.01} contributes to Corollary \ref{2.10.01} bringing it closer Th\'eor\`eme Principal of \cite{app}.
\begin{cor}
\label{3.10.01}
If either of the conditions $(\alpha)$ and $(\beta)$ is satisfied the the operators $\varPhi(\uufm)$ in {\em Corollary \ref{2.10.01}} are bounded in $\ddc$, hence they extend to bounded operators in $\kkc$. 
\end{cor}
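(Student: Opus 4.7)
The plan is to establish boundedness of each $\varPhi(\uufm)$ on the pre-Hilbert space $\ddc$ separately under the two hypotheses, and then to extend to $\kkc$ by density via the standard bounded-linear-transformation theorem.

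For $(\alpha)$ the argument is essentially a rewriting. Given $w=\sum_i\xi_i\varphi_{\ssfm_i,f_i}\in\ddc$, I would combine \eqref{1.03.08} with the inner product formula \eqref{1.07.01} to obtain
\[
\|\varPhi(\uufm)w\|_\ddc^2=\sum_{i,j}\xi_i\bar\xi_j\is{\varphi(\ssfm_j^*\uufm^*\uufm\ssfm_i)f_i}{f_j}_\eec,
\]
together with the analogous expression for $\|w\|_\ddc^2$ obtained by replacing $\uufm^*\uufm$ by the unit of $\ssf$. Absorbing the $\xi_i$'s into $f_i\in\eec$ turns these into precisely the two sides of $(\alpha)$, so the desired estimate $\|\varPhi(\uufm)w\|_\ddc^2\Le c(\uufm)\|w\|_\ddc^2$ is immediate.

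For $(\beta)$ I plan to use the Gelfand-style iterated Schwarz trick. The $*$-homomorphism property \eqref{3.01.13} together with the multiplicativity of $\varPhi$ recorded in Theorem \ref{1.08.08}(a) yield $\varPhi(\uufm)^*\varPhi(\uufm)=\varPhi(\uufm^*\uufm)$; writing $a=\uufm^*\uufm$ (so that $a^*=a$) therefore reduces matters to bounding $\is{\varPhi(a)w}{w}_\ddc$. Symmetry of $\varPhi(a)$ on $\ddc$ and the identity $\varPhi(a^2)=\varPhi(a)^2$ deliver, by repeated Cauchy--Schwarz, the recursion
\[
\is{\varPhi(a)w}{w}_\ddc^{2^n}\Le\|w\|_\ddc^{\,2(2^n-1)}\is{\varPhi(a^{2^n})w}{w}_\ddc,\qquad n\Ge 0.
\]
I would then expand the last factor for $w=\sum_i\varphi_{\ssfm_i,f_i}$ via \eqref{1.07.01}, majorize each summand $\|\varphi(\ssfm_j^*a^{2^n}\ssfm_i)f_i\|_\eec$ by $(\beta)$ and the submultiplicativity of $s$, and collect the outcome into a bound of the shape $C(w)\,s(a)^{2^n}$ with $C(w)$ independent of $n$. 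Taking $2^n$-th roots and letting $n\to\infty$ makes $C(w)^{1/2^n}\to 1$ and yields $\|\varPhi(\uufm)w\|_\ddc^2\Le s(a)\|w\|_\ddc^2\Le s(\uufm^*)s(\uufm)\|w\|_\ddc^2$.

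Having bounded each $\varPhi(\uufm)$ on the dense subspace $\ddc\subset\kkc$, its unique continuous extension to $\kkc$ is automatic. I expect the main obstacle to lie in the $(\beta)$ step: one must organize the iteration so that the $w$-dependent prefactor is genuinely inert under the $2^n$-th root (the classical spectral-radius phenomenon) and verify that each power $a^{2^n}$ is of the form $b_n^*b_n$, so that positivity of $\varPhi(a^{2^n})$ legitimizes the use of Schwarz without absolute values.
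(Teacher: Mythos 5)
Your proposal is correct. The paper itself offers no proof of Corollary \ref{3.10.01}: it delegates the equivalence of $(\alpha)$ and $(\beta)$ with boundedness of $\varPhi(\uufm)$ to the cited references \cite{pams}, \cite{ark}, \cite{murphy}, and condition $(\alpha)$ is Sz.-Nagy's original one from \cite{app}. Your two arguments are exactly the standard ones those sources contain: for $(\alpha)$, the identity $\|\varPhi(\uufm)w\|_{\ddc}^{2}=\sum_{i,j}\xi_i\bar\xi_j\is{\varphi(\ssfm_j^{*}\uufm^{*}\uufm\ssfm_i)f_i}{f_j}_{\eec}$ makes the estimate a literal restatement of the hypothesis; for $(\beta)$, the iterated Schwarz recursion is legitimate because $a=\uufm^{*}\uufm$ satisfies $a^{2^{n}}=(a^{2^{n-1}})^{*}a^{2^{n-1}}$, so $\is{\varPhi(a^{2^{n}})w}{w}_{\ddc}=\|\varPhi(a)^{2^{n-1}}w\|_{\ddc}^{2}\Ge 0$ and the $n$-independent prefactor $C(w)=\sum_{i,j}d(f_i)\|f_j\|_{\eec}\,s(\ssfm_j^{*})s(\ssfm_i)$ indeed washes out under the $2^{n}$-th root. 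Nothing is missing.
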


Theorem \ref{3.18.12} has now its counterpart as well.
\begin{cor}\label{4.10.01}
Under the circumstances of\, {\em Corollaries 	\ref{2.10.01}} and {\em \ref{3.10.01}} suppose $\varPhi$ is a minimal  dilation of $\varphi$. If $(\ssfm_{\alpha})_{\alpha}$ is net such that for any $\ttfm,\uufm\in\ssf$
\begin{equation*}%\label{1.18.12}
\is{\varphi(\ttfm\ssfm_{\alpha}\uufm)f}g_{\eec}\rightarrow\is{\varphi(\ttfm\ssfm\uufm)f}g_{\eec},\quad f,g\in\ffc.
\end{equation*}
then 
\begin{equation}\label{2bis.18.12}
\is{\varPhi(\ssfm_{\alpha})w}u_{\ddc}\rightarrow\is{\varPhi(\ssfm)w}u_{\ddc},\quad w,u\in \ddc.
\end{equation}
If either $\sup_{\alpha}c(\ssfm_{\alpha})<+\infty$ or, equivalently, $\sup_{\alpha}d(\ssfm_{\alpha})<+\infty$ then \eqref{2bis.18.12} can be replaced by
\begin{equation*}
\varPhi(\ssfm_{\alpha})\rightarrow\varPhi(\ssfm)
\end{equation*}
in the week topology of $\kkc$.
\end{cor}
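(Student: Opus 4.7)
The plan is to verify \eqref{2bis.18.12} first on the spanning vectors $\varphi_{\uufm,f}$, $\varphi_{\ttfm,g}$ of $\ddc$, extend by sesquilinearity to all of $\ddc$, and then propagate the weak convergence to $\kkc$ using a uniform norm bound. First I would compute, using \eqref{1.03.08} and the definition \eqref{1.07.01} of the inner product on $\ddc$,
$$
\is{\varPhi(\ssfm_{\alpha})\varphi_{\uufm,f}}{\varphi_{\ttfm,g}}_{\ddc}
= \is{\varphi_{\ssfm_{\alpha}\uufm,f}}{\varphi_{\ttfm,g}}_{\ddc}
= \is{\varphi(\ttfm^{*}\ssfm_{\alpha}\uufm)f}{g}_{\eec}.
$$
The hypothesis, applied with $\ttfm^{*}$ in place of $\ttfm$, then gives convergence of the right side to $\is{\varphi(\ttfm^{*}\ssfm\uufm)f}{g}_{\eec} = \is{\varPhi(\ssfm)\varphi_{\uufm,f}}{\varphi_{\ttfm,g}}_{\ddc}$, which is \eqref{2bis.18.12} on generators. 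Sesquilinearity of $\is{\cdot}{\cdot}_{\ddc}$ extends this convergence to arbitrary finite linear combinations, and by the minimality condition \eqref{3.08.08} these fill out all of $\ddc$; this proves the first assertion.

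For the second part, observe that condition $(\alpha)$, read through \eqref{1x.07.01}, is nothing other than
$$
\|\varPhi(\uufm)v\|^{2}_{\ddc} \Le c(\uufm)\,\|v\|^{2}_{\ddc},\quad v\in\ddc,
$$
so that $\|\varPhi(\uufm)\|_{B(\kkc)}\Le\sqrt{c(\uufm)}$ for the bounded extension. A uniform bound on $c(\ssfm_{\alpha})$ — equivalent, via the $(\alpha)\Leftrightarrow(\beta)$ equivalence recorded before Corollary \ref{3.10.01}, to a uniform bound on $d(\ssfm_{\alpha})$ — therefore produces a uniform operator-norm bound $M$ on the family $(\varPhi(\ssfm_{\alpha}))_{\alpha}\subset B(\kkc)$. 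Combined with the weak convergence already established on the dense subspace $\ddc\subset\kkc$, a standard three-$\varepsilon$ density argument delivers weak convergence of $\varPhi(\ssfm_{\alpha})$ to $\varPhi(\ssfm)$ throughout $\kkc$.

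The proof is essentially the pattern of Theorem \ref{3.18.12} transported from the Tikhonov topology to the weak topology of the inner product space $\ddc$, so no genuine new idea is needed. The one point worth watching is the reinterpretation of $(\alpha)$ as an operator-norm inequality on $\varPhi(\uufm)$ via \eqref{1x.07.01}; once that is done, the extension to $\kkc$ reduces to the routine fact that a uniformly norm-bounded net that converges weakly on a dense subspace converges weakly everywhere.
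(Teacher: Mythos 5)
Your argument is correct and is exactly what the paper intends: the paper states this corollary without a written proof, merely noting it is the counterpart of Theorem \ref{3.18.12}, and your computation on the generators $\varphi_{\uufm,f}$ via \eqref{1.03.08} and \eqref{1.07.01} is precisely that theorem's proof transported to the inner product $\is{\cdot}{\cdot}_{\ddc}$. Your reading of condition $(\alpha)$ as the operator bound $\|\varPhi(\uufm)\|\Le\sqrt{c(\uufm)}$ and the ensuing density argument likewise match the role the paper assigns to Corollary \ref{3.10.01}, so nothing is missing.
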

Notice that  unlike in \cite{app} $\varphi(\ssfm)$'s are not supposed to be bounded operators. This comes {\em a posteriori} as a consequence of the imposed boundedness condition $(\alpha)$ or ($\beta$). So what we have got in this Subsection is a natural generalization of Th\'eor\`eme Principal of \cite{app}.

\begin{rem}\label{1.28.05}
Notice that \eqref{1.07.01}, in view of \eqref{1.10.01},  can be red as
\begin{equation*}%\label{1.25.05}
\is\varphi{\varphi_{\ttfm,g}}_{\ddc}=\is{\varphi(\ttfm^{*})}g_{\eec}=\is g{\varphi(\ttfm)}_{\eec}
\end{equation*}
for $\varphi=\sum_{i}\varphi_{\ssfm_{i},f_{i}}\in\ddc$ which is just a  kind of the reproducing kernel property. On the other hand, it has been know for long time in the case of scalar valued kernel, cf. \cite[pp. 37-38]{aron}, a completion of $\ddc$ can be chosen to be sill a space of scalar functions.
 This make it tempting to try realizing the completion within the space $\eec^{\ssf}$ of $\eec$--functions on $\ssf$.  Let us mention that if the operators $\varphi(s)\in{\bf L}(\ffc,\eec)$, $s\in\ssf$, are bounded then one can prove, adapting  that presented in \cite[pp. 5-7]{sza},  the completion of $\ddc$ can be achieve as the space of functions on $\ssf$ taking values in the completion of $\ffc$.

%\begin{pro}\label{2.25.05}
%
%
%
%Fix a completion $\eec_{-}$ of $\eec$
%
%Suppose the operators $\varphi(s)\in{\bf L}(\ffc,\eec)$, $s\in\ssf$, are bounded. Then $\hhc$, the completion of $\ddc$ can be chosen to be a space of functions 
%\end{pro}
%\begin{proof}
%Let $\hhc$ be any completion of $\ddc$. Fix $\tilde{\phantom{a}}$ for denoting the imbedding of $\ddc$ into $\hhc$. Then for fixed $t\in\ssf$ and $F\in\hhc$ consider the functional on the closure of $\ob{\varphi(t^{*})}$, the range of $\varphi(t^{*})$ in $\eec_{-}$
%\begin{equation}\label{3.25.05}
%\funk{\omega_{t,F}}{\varphi(t^{*})g}{\is{F}{\tilde \varphi_{t,g}}}_{\hhc}
%\end{equation}
%Because 
%\begin{equation}
%|\is{F}{\tilde \varphi_{t,g}}_{\hhc}|\Le\|F\|_{\hhc}\|\tilde \varphi_{t,g}\|_{\hhc}
%=\|F\|_{\hhc}\|\varphi_{t,g}\|_{\ddc}
%\end{equation}
%
%\end{proof}
\end{rem}

\subsection{Back to Na\u{\i}mark dilations}\label{11.30.12}
Let us see what happens to Corollary \ref{2a.08.08} in the current environment. Suppose $\ssf$ is a semigroup defined as there considered with involution being the identity mapping and $\varphi\colon\ssf\mapsto L(\ffc,\eec)$ is a {\em positive operator valued measure} (recall $\ffc\subset\eec$, cf. Subsection \ref{9.30.12}). The latter means that  $\is{\varphi(\,\cdot)f}{f}_{\eec}$ is a scalar positive measure for every $f\in\ffc$. Adapting the argument used in \cite[pp. 30-31]{mlak} we get  $\sum_{i,j}\is{\varphi(\ssfm^{*}_{j}\uufm\ssfm_{i})f_{i}}{f_{j}}_{\eec}$ as a function of $\uufm$ is a positive scalar measure and therefore $\varphi $ is positive definite. Because $\sum_{i,j}\is{\varphi(\ssfm^{*}_{j}\uufm\ssfm_{i})f_{i}}{f_{j}}_{\eec}$ as a positive measure is increasing in $\uufm$ condition ($\alpha$) is automatically satisfied with $c(\uufm)=1$. This implies that all the operators $\varPhi(\uufm)$ are bounded (in fact are of the norm at most $1$).

Due to the specific nature of the $\ast$--semigroup $\ssf$ it comes from Theorem \ref{1.08.08}, conclusion (a), the operators $\varPhi(\uufm)$ are continuous idempotents in $\ddc$ therefore they extend to bounded idempotents in $\kkc$. Corollary \ref{2.10.01} tells us that they are selfadjoint. Thus the operators $\varPhi(\uufm)$, $\uufm\in\ssf$, are orthogonal projections. 

Furthermore, Corollary \ref{4.10.01} ensures $\varPhi$ to be a spectral measure. Making use of Remark \ref{1.01.01} we may state the following.

\begin{cor}
If $\varphi\colon\ssf\mapsto L(\ffc,\eec)$ is a positive operator valued measure such that 
\begin{equation}\label{3.11.06}
\is{\varphi(1)f}f_{\eec}\Le c\|f\|^{2}_{\eec}
\end{equation}
 then there is a Hilbert space $\kkc$ and a bounded $($of norm $\Le c^{1/2}$$)$ operator $\funk V \hhc \kkc$ 
 such that 
\begin{equation*}%\label{10.11.01}
\varphi(\uufm)=V^{*}\varPhi(\uufm)V,\quad \uufm\in\ssf.
\end{equation*} 
If instead of \eqref{3.11.06} one has 
$$
\is{\varphi(1)f}f_{\eec}=\|f\|^{2}_{\eec}
$$
then $V$ is an isometric imbedding and, consequently, $V^{*}$ becomes
 the orthogonal projection of $\kkc$ onto the closure of \,$\ddc$ in $\kkc$.
\end{cor}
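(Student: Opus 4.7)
The plan is to assemble the machinery already developed. As the paragraph preceding the statement observes, a positive operator valued measure $\varphi$ is automatically positive definite and condition $(\alpha)$ is satisfied with $c(\uufm)=1$. Therefore Corollaries \ref{2.10.01}, \ref{3.10.01} and \ref{4.10.01} all apply: $\ddc$ carries the inner product \eqref{1.07.01}, the algebraic dilation formula $\varphi(\uufm)=S\varPhi(\uufm)T$ holds, and each $\varPhi(\uufm)$ extends from $\ddc$ to a bounded operator on its Hilbert space completion $\kkc$. The discussion immediately preceding the corollary, moreover, already identifies these extensions as orthogonal projections making $\varPhi$ a spectral measure, so the only remaining task is to exhibit the operator $V$ and identify its adjoint.

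Next I would set $V\okr T$, with $T\colon\ffc\to\ddc\subset\kkc$ given by \eqref{1.27.12}. The computation in Remark \ref{1.01.01} shows
\begin{equation*}
\|Tf\|_{\ddc}^{2}=\is{\varphi_{1,f}}{\varphi_{1,f}}_{\ddc}=\is{\varphi(1)f}{f}_{\eec}\le c\|f\|_{\eec}^{2},
\end{equation*}
so $T$ is bounded of norm $\le c^{1/2}$ and extends by continuity to a bounded operator $V\colon\hhc\to\kkc$ with the same norm bound, where $\hhc$ is the Hilbert space completion of $\ffc$. Remark \ref{1.01.01} also records that $S$ and $T$ are formally adjoint, so the genuine Hilbert space adjoint $V^{*}\colon\kkc\to\hhc$ restricts to $S$ on the dense subspace $\ddc$. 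Substituting into $\varphi(\uufm)=S\varPhi(\uufm)T$ and extending by continuity yields $\varphi(\uufm)=V^{*}\varPhi(\uufm)V$ for every $\uufm\in\ssf$.

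For the second assertion, the hypothesis $\is{\varphi(1)f}{f}_{\eec}=\|f\|_{\eec}^{2}$ promotes the above inequality to an equality, so $V$ is isometric. Then $V^{*}V=I_{\hhc}$ and $VV^{*}$ is the orthogonal projection of $\kkc$ onto the closed range $V(\hhc)$, which coincides with the closure in $\kkc$ of $T(\ffc)$. Identifying $\hhc$ with $V(\hhc)\subset\kkc$ through the isometric embedding $V$, the operator $V^{*}$ becomes precisely the orthogonal projection onto this closed subspace, as claimed.

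The argument is essentially bookkeeping; the only point demanding care is the interpretation in the isometric case of ``$V^{*}$ as projection onto the closure of $\ddc$'', which must be read through the isometric identification of $\hhc$ with the closure of $T(\ffc)$ inside $\kkc$, not literally as the closure of the dense subspace $\ddc$ (which is all of $\kkc$).
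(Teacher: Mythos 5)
Your proposal is correct and follows exactly the route the paper intends: the corollary is stated without a separate proof, being just the assembly of the preceding discussion (positive definiteness, condition $(\alpha)$ with $c(\uufm)=1$, the projections $\varPhi(\uufm)$) together with Remark \ref{1.01.01}, which is precisely what you do by taking $V$ to be the continuous extension of $T$ and $V^{*}$ the extension of $S$ substituted into \eqref{2.08.08}. Your closing caveat about reading ``the orthogonal projection onto the closure of $\ddc$'' through the isometric identification of $\hhc$ with $\overline{T(\ffc)}\subset\kkc$ is a fair and accurate gloss on the paper's slightly loose phrasing.
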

This is a kind of Na\u{\i}mark's theorem \cite{naj} tailored to meet our more general needs, in particular if an operator valued measure defined by \eqref{2.16.12} is positive definite.
\vspace{9pt}
\subsubsection*{Acknowlegdement} The authors would like to express their thanks to the referee for the comments letting them remaster the paper.

  \bibliographystyle{amsplain}

    \end{document}